\def\numberwithin#1#2{\@ifundefined{c@#1}{\@nocnterrr}{
  \@ifundefined{c@#2}{\@nocnterr}{
  \@addtoreset{#1}{#2}
  \toks@\expandafter\expandafter\expandafter{\csname the#1\endcsname}
  \expandafter\xdef\csname the#1\endcsname
    {\expandafter\noexpand\csname the#2\endcsname
     .\the\toks@}}}}
\numberwithin{equation}{section}
\newtheorem{theorem}{Theorem}
\numberwithin{theorem}{section}
\newtheorem{proposition}[theorem]{Proposition}
\newtheorem{remark}[theorem]{Remark}
\begin{document}

\title{Ancient solutions for Andrews' hypersurface flow}

\author{Peng Lu}
\address{Department of Mathematics, University of Oregon, Eugene, OR 97403}
\email{penglu@uoregon.edu}

\author{Jiuru Zhou}
\address{School of Math. Science \\
Yangzhou University \\
Yangzhou,  Jiangsu 225002, China}
\email{zhoujiuru@yzu.edu.cn}


\date{\small revised \today}

\begin{abstract}
We construct the ancient solutions of the hypersurface flows in Euclidean spaces
studied by B. Andrews in 1994.
As time $t \rightarrow 0^-$ the solutions collapse to a round point where $0$ is the singular time.
But as $t\rightarrow-\infty$ the solutions become more and more oval.
Near the center the appropriately-rescaled pointed Cheeger-Gromov limits are round cylinder solutions
$S^J \times \mathbb{R}^{n-J}$, $1 \leq J \leq n-1$.
These results are the analog of the corresponding results in Ricci flow ($J=n-1$) and mean curvature flow.

\smallskip
\noindent \textbf{Keywords}. Andrews' hypersurface  flow, ancient solutions, asymptotic limits

\smallskip
\noindent \textbf{MSC (2010)}. 53C44, 35K55, 58J35
\end{abstract}

\maketitle

\section{Introduction}

Ancient solutions are studied in various geometric flows including Ricci flow (\cite{Pe02},
\cite{DHS12}), curve shortening flow (\cite{DHS10}), and mean curvature flow (\cite{Wa11},
\cite{Wh03}, \cite{HH16}) because of their close relation with  the singularity analysis.
In this article we use Perelman's idea in the construction of ancient oval solutions for Ricci flow
and some ideas from \cite{HH16}
 to construct ancient oval solutions for the Andrews' flow (\cite{An94}).
Note that for mean curvature flow such a construction is done in  the nice work of Haslhofer and
Hershkovits (\cite{HH16}). 
The uniqueness of the oval solutions for mean curvature flow is proved recently by Angenent, P. Daskalopoulos, and and N. Sesum (\cite{ADS18}).

Now we recall the definition of Andrews' flow.
Let $M^n$ be a smooth manifold of dimension $n \geq 2$ without boundary.
  For a smooth family of immersions $\varphi: M \times  [0,T) \rightarrow \mathbb{R}
^{n+1}$ we denote hypersurface  $\varphi(M \times \{t \})$ in $\mathbb{R}^{n+1}$
by $\Sigma_t^n$ and denote the unit normal vector of $\Sigma_t$ at $\varphi(p,t)$ by
$\nu (\varphi(p,t), t)$ (chosen to be  outward-pointing if the hypersurface is convex).
We denote the principal curvatures $\Sigma_t^n$ at point $\varphi(p,t)$
by $\lambda_{t,1}(\varphi(p,t)) \leq \cdots \leq \lambda_{t,n}(\varphi(p,t))$.
Recall that a hypersurface is strictly convex if its principal curvatures are positive.

Define the cone $\Gamma_+^n =\{ (\lambda_1, \cdots, \lambda_n ) \in \mathbb{R}^n, \,
\forall i, \lambda_i >0 \}$.  Let $f$ be a smooth positive function on $\Gamma_+^n$. Throughout
this article we assume that $f$ is  symmetric, homogeneous of degree one, and $\frac{\partial f}
{\partial \lambda_i} >0$ for each $i$.  We further make the following assumption for $f$ (\cite{An94},
\cite{An07}).

\noindent \textbf{Assumption A}. Let function $f^*$ be defined by
$f^*(\lambda_1, \cdots, \lambda_n )  =- f( \lambda_1^{-1}, \cdots, \lambda_n^{-1} ) $.
Function $f$ is assumed to satisfy one of the following assumption
\begin{enumerate}
\item[(A1)] $n=2$, or

\item[(A2)] $f$ is convex, or

\item[(A3)] $f$ is concave on $\Gamma_+^n$ and $f$  approaches zero on the boundary
of $\Gamma_+^n $, or

\item[(A4)] both $f$ and  $f^*$ are concave $\Gamma_+^n$.
\end{enumerate}

 We consider the following equation of $\varphi$
\begin{equation}
 \frac{\partial}{\partial t} \varphi (p,t) = - f(\lambda_{t,1}(\varphi(p,t)), \cdots, \lambda_{t,n}
 (\varphi(p,t))) \cdot \nu (\varphi(p,t),t).   \label{eq Andrews flow}
\end{equation}
We call the flow (\ref{eq Andrews flow}) the Andrews' flow.
The solutions of Andrews' flow are necessarily strictly convex hypersurfaces.
Below we use $S^n $ to denote the unit sphere and  $S^n(R) \subset \mathbb{R}^{n+1}$
to denote the round sphere of radius $R$ centered at the origin.

\begin{theorem} \label{thm main}
Fixing a $J \in \{ 1,2, \cdots, n-1 \}$, we assume that the Condition $B_J$ holds for function $f$
(see  \S \ref{subsec compar cylinder} for the definition),
then there exists a non-spherical $O_{J+1} \times O_{n-J}$-rotationally symmetric
ancient solution $\varphi_J: S^n \times
(- \infty, 0) \rightarrow \mathbb{R}^{n+1}$  to Andrews' flow (\ref{eq Andrews flow})
 which has strictly convex time slices $\hat{\Sigma}_{J, \infty, t}$ and which develops
 a singularity at time $0$.
As $t \rightarrow 0-$ the solutions collapse to a round point.
\end{theorem}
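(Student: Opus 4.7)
The plan is to adapt Perelman's scheme for ancient ovals in Ricci flow, following the template carried out by Haslhofer--Hershkovits in the mean curvature flow setting. For each integer $k \geq 1$, I would first construct an $O_{J+1}\times O_{n-J}$-symmetric, strictly convex initial hypersurface $\Sigma_{J,k,-k}$ at time $t=-k$, whose two characteristic ``axis'' radii are tuned so that at time $-k$ it is squeezed between an inner shrinking round sphere and an outer, doubly-capped $S^J\times \mathbb{R}^{n-J}$-cylindrical barrier, each arranged to have singular time exactly $0$. The elongation in the $\mathbb{R}^{n-J}$ directions is chosen to grow with $k$. Flowing $\Sigma_{J,k,-k}$ forward by (\ref{eq Andrews flow}) produces a strictly convex solution $\Sigma_{J,k,t}$ on some maximal interval $[-k,T_k)$.

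Condition $B_J$ is precisely what certifies that the round cylinder $S^J(R)\times \mathbb{R}^{n-J}$ evolves under (\ref{eq Andrews flow}) with a controllable radius $R(t)$ comparable to that of the model shrinking sphere. By the avoidance principle for Andrews' flow, $\Sigma_{J,k,t}$ then lies inside the outer barrier and strictly contains the inner sphere for all $t\in[-k,T_k)$. This forces $T_k=0$ and yields uniform geometric bounds on every compact subinterval of $(-\infty,0)$. Combined with interior regularity for Andrews' flow under Assumption A, one obtains uniform $C^\infty$ estimates on the sequence $\{\Sigma_{J,k,t}\}_k$; a diagonal Cheeger--Gromov subsequence extraction then produces a smooth, strictly convex, $O_{J+1}\times O_{n-J}$-symmetric ancient solution $\varphi_J:S^n\times(-\infty,0)\to\mathbb{R}^{n+1}$.

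It remains to verify the two qualitative features at the ends of the time interval. Non-sphericity follows because at time $-k$ (for $k$ large) each $\Sigma_{J,k,-k}$ is, by construction, strictly more elongated than any round sphere becoming singular at $0$; this elongation is preserved by the comparison with the inscribed sphere and passes to the Cheeger--Gromov limit. For collapse to a round point as $t\to 0^-$, the inner sphere barrier forces a ball in the time-slice shrinking to the origin, while Andrews' original convergence theorem for strictly convex hypersurfaces under Assumption A shows that any rescaled pointed limit as $t\to 0^-$ must be round.

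The main obstacle I anticipate is the construction of the outer cylindrical barrier. Unlike mean curvature flow, where the cylinder $S^J(R)\times\mathbb{R}^{n-J}$ is an explicit self-similar shrinker with $R(t)=\sqrt{-2Jt}$, under Andrews' flow the radius evolves by an ODE depending on $f$ evaluated on curvatures of the form $(R^{-1},\ldots,R^{-1},0,\ldots,0)$, which lies on the boundary of $\Gamma_+^n$. Condition $B_J$ is precisely the quantitative input that both makes sense of this degenerate boundary evaluation and permits the gluing of a convex cap to the cylinder so as to produce a genuine compact supersolution barrier; the careful ODE and cap analysis at this step is where the bulk of the technical work should sit, and is deferred to \S\ref{subsec compar cylinder}.
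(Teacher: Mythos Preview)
Your overall strategy---Perelman's ancient-oval scheme in the Haslhofer--Hershkovits mold---is the same as the paper's, but your implementation diverges from it and contains a real gap.

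The paper does \emph{not} tune initial data at time $-k$ to have singular time exactly $0$. Instead it flows a fixed-scale capped almost-cylinder $\Sigma_{J,a,0}$ (length $\sim a$, radius $\sim 1$) from time $0$; the inner sphere and the \emph{infinite} cylinder $S^J(R(t))\times\mathbb{R}^{n-J}$ (this is all Condition $B_J$ is used for) pin the singular time $T_{J,a}$ between two $a$-independent constants. Then it parabolically rescales \emph{a posteriori}, choosing the factor $\Lambda_{J,a}$ so that the major/minor ratio $A/B$ first equals $2$ at the rescaled time $\hat t=-1$. This normalization immediately gives a uniform diameter bound at $\hat t=-1$ (Claim~D1), and non-sphericity of the limit follows because $A_\infty(-1)/B_\infty(-1)=2$.

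The step your proposal is missing is the control of the \emph{major} radius $A(t)$. Your cylindrical barrier, capped or not, does not do this: the infinite cylinder bounds only the minor radius $B(t)$, and a ``doubly-capped'' cylinder is not a solution of the flow, so you would have to build a genuine compact supersolution whose $z$-extent shrinks to something uniformly bounded by each fixed time $t_0<0$, independently of how long it started---which is essentially the problem you are trying to solve. The paper bypasses this entirely with a direct ratio argument (Claim~D2 and \S4.3): one places a round sphere of radius $\tfrac{1}{4}B(\hat t_0)$ inside $\hat\Sigma_{J,a,\hat t_0}$ centered at distance $\tfrac{1}{2}A(\hat t_0)$ from the origin on the $\mathbb{R}^{n-J}$-axis; comparison with this inscribed sphere shows $A$ cannot halve in time less than $B(\hat t_0)^2/32$. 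Since $B$ is monotone and bounded below by $\delta>0$ (from Claim~D1), the ratio $A/B$ takes at least $\delta^2/32$ to halve. This single observation both forces the rescaled initial times $\hat T_{J,a}\to-\infty$ and yields the uniform bounds on $A$ over compact time intervals needed to feed into the compactness theorem. Your barrier scheme, as written, does not supply a substitute for this argument.
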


\begin{proposition} \label{prop asym back limit cylinder}
Assume $f$ in  (\ref{eq Andrews flow}) satisfies Assumption E (see \S \ref{sec 3 condion E} for the definition).
Then the solutions $\varphi_J$ constructed in Theorem \ref{thm main}  have
the following backwards asymptotics:  for $K \rightarrow
\infty$ the parabolically-rescaled  flows $K^{-1} \hat{\Sigma}_{J, \infty, K^2t}$ sub-converge
to the round shrinking cylinder  $S^J( \sqrt{2J |t|}) \times \mathbb{R}^{n-J}$
 for $t \leq -1$.
\end{proposition}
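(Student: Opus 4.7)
The plan is to establish subsequential convergence by a standard parabolic compactness argument and then identify the limit as the claimed cylinder using the cylindrical sandwich from Condition $B_J$ together with the ovaling behavior built into the construction of $\varphi_J$.

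First I would set $\tilde{\Sigma}_{J,K,t} := K^{-1} \hat{\Sigma}_{J,\infty,K^2 t}$. Because $f$ is homogeneous of degree one, the rescaled family is again a solution of Andrews' flow, with the $O_{J+1}\times O_{n-J}$ symmetry and strict convexity preserved, and it is ancient on $(-\infty,0)$. From Condition $B_J$ applied along the construction in Theorem \ref{thm main}, the waist of $\hat{\Sigma}_{J,\infty,t}$ (its intersection with the $(J+1)$-plane fixed by the $O_{n-J}$ action) has inner and outer radii asymptotic to $\sqrt{2J|t|}$ as $t\to-\infty$, while the whole hypersurface is contained in the corresponding outer cylinder. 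Choosing basepoints $p_K$ on the waist of $\tilde{\Sigma}_{J,K,-1}$, the rescaled waist radii are uniformly bounded, and Andrews' interior regularity for strictly convex flows then yields uniform $C^\infty$ bounds on the second fundamental form on compact subsets of spacetime near $p_K$.

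By Cheeger--Gromov compactness for convex ancient flows, some subsequence $(\tilde{\Sigma}_{J,K_j,t},p_{K_j})$ converges smoothly on compact subsets of $(-\infty,0)$ (in particular on $t\le -1$) to an ancient, convex, $O_{J+1}\times O_{n-J}$-symmetric limit $\Sigma^\infty_t$ of Andrews' flow. Since the pre-limit unrescaled hypersurfaces become increasingly oval as $t\to-\infty$, the rescaled extent in the $\mathbb{R}^{n-J}$ direction tends to infinity; by an argument modeled on \cite{HH16}, $\Sigma^\infty_t$ splits off an $\mathbb{R}^{n-J}$ factor and hence has the form $\Sigma^J_t \times \mathbb{R}^{n-J}$ with $\Sigma^J_t$ a convex ancient family in $\mathbb{R}^{J+1}$. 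By the remaining $O_{J+1}$ symmetry, $\Sigma^J_t = S^J(\rho(t))$, and on the cylinder $S^J(\rho)\times\mathbb{R}^{n-J}$ the principal curvatures are $1/\rho$ with multiplicity $J$ and $0$ with multiplicity $n-J$, so homogeneity of $f$ together with the normalization $f(1,\dots,1,0,\dots,0)=J$ from Assumption E gives $\rho'(t) = -J/\rho(t)$, whose ancient solution collapsing at $t=0$ is $\rho(t)=\sqrt{2J|t|}$.

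I expect the main obstacle to be the splitting step: rigorously verifying that the ovaling observed in Theorem \ref{thm main} forces unbounded rescaled diameter in the $\mathbb{R}^{n-J}$ direction, which is what makes the flat factor appear in the limit. This requires a quantitative lower bound on the growth of the extent of $\hat{\Sigma}_{J,\infty,t}$ in the $\mathbb{R}^{n-J}$ direction relative to the waist radius, and is the direct analog of the ovaling estimate of Haslhofer--Hershkovits in the mean curvature flow setting; the degeneracy of $f$ on $\partial\Gamma_+^n$ (which is why the limiting cylinder can fail to literally solve the flow when $f$ vanishes on the boundary) must also be handled here, using Assumption E to control the behavior of $f$ near vectors with vanishing entries.
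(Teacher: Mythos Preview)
Your outline follows the same overall strategy as the paper: parabolically rescale, apply a compactness theorem, then show the limit splits off $\mathbb{R}^{n-J}$ and is hence a round cylinder by symmetry. A few points of comparison and one correction are worth noting.

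For compactness, the paper does not invoke an abstract Cheeger--Gromov statement; it uses Theorem~\ref{thm compactness Andrews flow noncpt limits}, which is built on uniform parabolic estimates for the radial graph equation (\ref{eq parab r over sph}) on subsets of $S^n(1)$, and this is precisely where Assumption~E enters (to keep the equation uniformly parabolic as some principal curvatures tend to zero). The required input is a uniform two-sided bound on the minor radius $B_{J,K}(t)$ for $t\in[t_*,-1]$, obtained by an interior sphere barrier (upper bound) and an exterior cylinder barrier via Remark~\ref{rk comar cylind} (lower bound).

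For the splitting, the paper's argument is cleaner than what you sketch: pick a tip point $(0,z_{K,*})\in\Sigma_{J,K,t}$ and its image under the reflection (\ref{eq reflect}); the minimal geodesic between them has length $\to\infty$ (since $A_{J,K}(t)\to\infty$) yet meets a fixed ball (since $B_{J,K}(t)$ is bounded). These geodesics sublimit to a line in the limit hypersurface, and the Cheeger--Gromoll splitting theorem (using nonnegative sectional curvature from convexity) combined with the $O_{J+1}\times O_{n-J}$ symmetry forces the full $\mathbb{R}^{n-J}$ factor. This resolves exactly the obstacle you flagged, and the ``ovaling'' input needed is simply $A_{J,\infty}(\hat t)/B_{J,\infty}(\hat t)\to\infty$ as $\hat t\to-\infty$, which is already recorded in \S\ref{subsec limiting process}.

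One correction: Assumption~E is the condition $\partial f/\partial\lambda_i>0$ on $\overline{\Gamma_+^n}\setminus\{0\}$; it carries no normalization. The relevant constant for the cylinder radius is $c_{J0}=f(0,\dots,0,1,\dots,1)$ from Assumption~$B_J$, giving $\rho(t)=\sqrt{2c_{J0}|t|}$; the specific value $\sqrt{2J|t|}$ in the statement presumes a normalization of $f$ that is not part of Assumption~E.
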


\begin{remark}
Corresponding to the choice of $J=0$ and for mean curvature flow Bourni, Langford and Tinglia construct compact convex and collapsing ancient solutions that lie in a slab with $O(1)\times O(n)$-symmetry (\cite{BLT17}, see also \cite{HIMW} and \cite{Wa11}). They further study the uniqueness of such solutions.
\end{remark}

\vskip .1cm
\noindent \textbf{Acknowledgement}.
The authors would like to thank Mat Langford for bringing our attention to \cite{BL16} and \cite{BLT17}. J.R.Z. would like to thank China Scholarship Council for providing a fellowship as a visiting scholar at University of Oregon. P.L. is partially supported by Simons Foundation through Collaboration Grant 229727 and 581101. J.R.Z. is partially supported by a PRC grant NSFC 11426195.

\section{Comparison principle for Andrews' flow} \label{sec 2 comparison}

It is well-known that mean curvature flow satisfies a powerful comparison priciple (\cite{Ec04}).
We will need a comparison principle for Andrews' flow (\cite[Proof of Theorem 6.2]{An94}
and \cite[Theorem 5]{ALM13}) later.

\subsection{The comparison principle}
Before we state it, we need a little preparation. When we write  $\varphi (p,t)$ in (\ref{eq Andrews flow})
as graph of function $u(\tilde{x},t)$, i.e.,  $\varphi (p,t) = (\tilde{x}(p,t), u( \tilde{x}(p,t), t))$ with
 unit normal direction  $\nu(\tilde{x},t) = \nu(\varphi(p,t),t)=\frac{1}{\sqrt{|Du|^2 +1}}(-Du,1)$ where
 $Du(\tilde{x},t) = (\partial_{\tilde{x}^1} u
(\tilde{x},t), \cdots, \partial_{\tilde{x}^n} u (\tilde{x},t))$, the equation (\ref{eq Andrews flow}) becomes
\begin{align*}
\begin{cases}
\frac{\partial \tilde{x}}{\partial t} =  \frac{f(\lambda_{t,1}, \cdots, \lambda_{t,n})}{
\sqrt{| Du|^2 +1}} \cdot Du, \\
\langle Du , \frac{\partial \tilde{x}}{\partial t} \rangle  + \frac{\partial u}{\partial t} = - \frac{
f(\lambda_{t,1}, \cdots, \lambda_{t,n})}{\sqrt{| Du|^2 +1}},
\end{cases}
\end{align*}
where $\lambda_{t,i} = \lambda_{t,i}(\tilde{x}) = \lambda_{t,i} (\tilde{x}(p,t), u( \tilde{x}(p,t), t))$.
It follows that the graph function $u(\tilde{x},t)$ defined on some open subset of $\mathbb{R}^n
\times [0,T)$ satisfies
\begin{equation}
 \frac{\partial u}{\partial t} = -
 f(\lambda_{t,1}, \cdots, \lambda_{t,n}) \sqrt{| Du|^2 +1}. \label{eq Andrews flow in u func}
\end{equation}

Below we will use the following convention for the second fundamental form of graph hypersurface
 $( \tilde{x}, u(\tilde{x}))$ as used in \cite[(2.5)]{An94},
$h =(h_{AB}) = \left ( - \frac{D_{AB}^2 u}{\sqrt{|Du|^2 +1}} \right )$ where $D_{AB}^2 u (\tilde{x}) =
\partial_{\tilde{x}^A} \partial_{\tilde{x}^B} u(\tilde{x})$.

\begin{proposition}[B. Andrews]  \label{prop comparison prin}
Let $\varphi_a: M^n_a \times [0,T) \rightarrow \mathbb{R}^{n+1}, \, a=1, 2$, be two solutions
of Andrews' flow (\ref{eq Andrews flow}) with strictly convex initial hypersurfaces $\Sigma_{a,0}$ where
  $\Sigma_{a,t} = \varphi_a(M_a\times \{t \}) \subset \mathbb{R}^{n+1}$.
We further assume that $M_1$ is closed, that $M_2$ with induced metric by map $\varphi_{2}
(\cdot, t)$ is complete for each $t$, that map $\varphi_{2} (\cdot, t): M_2 \rightarrow \mathbb{R}
^{n+1}$ is proper for each $t$, and that the convex hull of  $\Sigma_{2,0}$ contains  $\Sigma_{1,0}$.
Then the distance between hypersurfaces $\Sigma_{1,t}$ and $\Sigma_{2,t}$ is non-decreasing
in time $t$.
\end{proposition}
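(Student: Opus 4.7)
I would prove the monotonicity of the minimum distance
\[
D(t) := \min_{p \in M_1,\, q \in M_2}\, |\varphi_1(p,t) - \varphi_2(q,t)|
\]
by the standard avoidance-principle mechanism, with the Andrews monotonicity $\partial f/\partial\lambda_i > 0$ playing the role that $H$-monotonicity plays for mean curvature flow. Compactness of $M_1$ together with properness of $\varphi_2(\cdot, t)$ makes the minimum attained, and smoothness of the two flows then makes $D(t)$ locally Lipschitz; it therefore suffices to show $(D^2)'(t)\geq 0$ at every point of differentiability.

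\textbf{First-order analysis and curvature inequality.} Fix such a time $t$ with $D(t)>0$ and let $(p_0, q_0)$ attain the minimum. Write $r = D(t)$ and $v = \varphi_2(q_0, t) - \varphi_1(p_0,t)$, so $|v| = r$. Stationarity of $\tfrac12|\varphi_1-\varphi_2|^2$ in $p$ and $q$ makes $v$ normal to both hypersurfaces at the corresponding points, and because $\Sigma_{1,t}$ is strictly convex and lies in the convex body bounded by $\Sigma_{2,t}$, the two outward unit normals both equal $v/r$. Translating $\Sigma_{1,t}$ by $v$ then produces a strictly convex hypersurface through $\varphi_2(q_0, t)$ sharing its tangent hyperplane and outward normal with $\Sigma_{2,t}$. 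Representing both as graphs $y = u_i(x)$ over this common tangent hyperplane (with $u_i(0) = 0$ and $Du_i(0) = 0$), the minimum-distance property forces the local inequality $u_1(x) \leq u_2(x)$, since otherwise some point of the original $\Sigma_{1,t}$ would lie vertically below a point of $\Sigma_{2,t}$ at distance strictly less than $r$. This gives $D^2(u_2 - u_1)(0)\geq 0$, i.e., the matrix inequality $h_{\Sigma_1}(p_0, t) \geq h_{\Sigma_2}(q_0, t)$ in the convention of \cite[(2.5)]{An94}. Ordering eigenvalues and invoking $\partial f/\partial\lambda_i > 0$ with the symmetry of $f$ yields $f_1 \geq f_2$ for the Andrews speeds at the two points.

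\textbf{Time derivative and envelope argument.} Holding $(p_0, q_0)$ fixed and differentiating $\tfrac12|\varphi_1(p_0,s) - \varphi_2(q_0,s)|^2$ in $s$ at $s = t$ gives
\[
\bigl\langle -v,\; -f_1\nu_1 + f_2\nu_2\bigr\rangle = r\,(f_1 - f_2) \;\geq\; 0.
\]
An envelope argument upgrades this to a statement about $D^2$: the nonnegative function $s \mapsto |\varphi_1(p_0,s) - \varphi_2(q_0,s)|^2 - D(s)^2$ vanishes at $s = t$, and since it is differentiable there its derivative must vanish, forcing $(D^2)'(t) = 2r(f_1 - f_2)\geq 0$. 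Absolute continuity of $D^2$ then delivers the desired monotonicity of $D$.

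\textbf{Main obstacle.} The geometric content is concentrated in the matrix inequality $h_{\Sigma_1} \geq h_{\Sigma_2}$ at nearest paired points of nested strictly convex hypersurfaces --- standard convex geometry, but the step that genuinely uses strict convexity and the graph representation. The rest is either routine differentiation or bookkeeping; the noncompactness hypotheses are used only to ensure that the minimum is attained and that minimizers stay in a locally compact subset as $t$ varies.
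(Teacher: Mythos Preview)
Your proposal is correct and is essentially the same argument as the paper's: both differentiate the minimum distance via Hamilton's trick (your ``envelope argument''), write the two hypersurfaces as graphs over the common tangent hyperplane at a nearest pair, use the second-order minimum condition to get $h_{\Sigma_1}\ge h_{\Sigma_2}$, and then invoke the monotonicity $\partial f/\partial\lambda_i>0$ (with the Courant minimax principle to pass from the matrix inequality to the ordered eigenvalues) to conclude $f_1\ge f_2$. The only cosmetic differences are that the paper works with $\rho(t)$ and the graph PDE $\partial_t u=-f\sqrt{1+|Du|^2}$ directly rather than with $D(t)^2$ and the ambient flow equation, and does not translate $\Sigma_{1,t}$ before writing the graphs.
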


\begin{proof}
 Let $\rho (t)$ be the distance between $\Sigma_{1,t}$  and $\Sigma_{2,t}$.
 Below we will use Hamilton's trick to prove (super-right) derivative
$\frac{d}{dt} \rho(t) \geq 0$ whenever $\rho (t) >0$, then the proposition follows.

Fix a $t \in [0,T)$ with $\rho (t) >0$. By  the assumption the distance $\rho (t)$
is attained by points $x^0_{1,t} \in \Sigma_{1,t}$  and  $x^0_{2,t} \in \Sigma_{2,t}$,
$\rho (t) =|x^0_{1,t} - x^0_{2,t}|$. The tangent planes $T_{x^0_{1,t}} \Sigma_{1,t}$ and
 $T_{x^0_{2,t}} \Sigma_{2,t}$ are parallel. Hence $\Sigma_{a,t}$ locally (close to $x^0_{a,t}$)
 can be written as graphs over  some common small ball in $\mathbb{R}^n$.
 Without loss of generality we may assume that the graph functions are $u_a(\tilde{x}, t)$ over open
 ball $B^n_0(r)$ of center $0$ and radius $r$ such that $x^0_{a,t}=(0, u_a(0,t))$. From the assumption
 we may also assume that $u_1(0,t)  < u_2(0,t)$, that the
normal vectors are $\nu_a( \tilde{x},t)=\frac{1}{\sqrt{|Du_a|^2 +1}}(-Du_a,1)$,
 and that $\Sigma_{a,t} \cap B^{n+1}_{x^0_{a,t}}(r_*) \subset
 \{ (\tilde{x}, u_a( \tilde{x},t)), \,  \tilde{x} \in B^n_0(r) \}$ for some $r_* >0$ small enough.

By the construction above for the particular $t$ we have $\rho (t) = u_2(0,t) - u_1(0,t)$ and function
$u_2(\cdot,t) - u_1(\cdot,t)$ on $B^n_0(r)$ has a  positive local minimum at $\tilde{x} =0$. Hence
\begin{equation*}
 \left . D (u_2(\tilde{x},t) - u_1(\tilde{x},t)) \right |_{\tilde{x}=0} =0, \quad
 \left . D^2 (u_2(\tilde{x},t) - u_1(\tilde{x},t)) \right |_{\tilde{x}=0} \geq 0.
\end{equation*}
Let $g_{a,t}(\tilde{x})$ and $h_{a,t}(\tilde{x})$ be the induced metric and second fundamental form of
hypersurface $\Sigma_{a,t}$ at $(\tilde{x}, u_a( \tilde{x},t))$
using the canonical choice of unit normal vectors, respectively.
Hence we have that
\begin{equation} \label{eq 2fund test}
g_{1,t} (0) =g_{2,t} (0) \quad \text{ and } \quad  h_{2,t} (0) \leq h_{1,t} (0).
\end{equation}

By \cite[Theorem 1.1]{An07} the hypersurfaces $\Sigma_{a,t}$ are strictly convex for each $t$, i.e.,
the principal curvatures $\lambda_{a,t,i}( \tilde{x}) >0$ for each $a$, $t$,  $i$, and $\tilde{x}$.
We compute at the $t$ by using  the Hamilton's trick and equations (\ref{eq Andrews flow in u func})
and (\ref{eq 2fund test})
\begin{align*}
\frac{d}{dt} \rho (t) & = \left . \frac{\partial (u_2-u_1)}{\partial t} \right |_{\tilde{x}=0} \\
&=\left . \left (-  f( \lambda_{2,t,1}, \cdots,  \lambda_{2,t,n}) \sqrt{1+| Du_2|^2}
+ f( \lambda_{1,t,1}, \cdots,  \lambda_{1,t,n}) \sqrt{1+| Du_1|^2}  \right ) \right |_{\tilde{x}=0} \\
& = \left . \left (  f( \lambda_{1,t,1}, \cdots,  \lambda_{1,t,n}) - f( \lambda_{2,t,1},
 \cdots,  \lambda_{2,t,n})  \right ) \sqrt{1+| Du_1|^2} \right |_{\tilde{x}=0}.
\end{align*}

Below we use $S$ to denote linear subspace of tangent space
$T_{x^0_{a,t}} \Sigma_{a,t}$. By Courant minmax principle we have the principal curvature
\[
\lambda_{a,t,i} (0) = \min_{\operatorname{dim}S =i} \,\, \max_{y \in S, \, g_{a,t}(y,y) =1}
 y h_{a,t}(0) y^T,
\]
 it follows from  (\ref{eq 2fund test}) that
$0< \lambda_{2,t,i} (0) \leq \lambda_{1,t,i} (0) $ for each $i$.
Recall that $f$ is assumed to be strictly monotone increasing in each argument,
 we conclude  that $ f(\lambda_{2,t,1}, \cdots,
\lambda_{2,t,n}) \leq f( \lambda_{1,t,1},  \cdots,  \lambda_{1,t,n})$ at $\tilde{x}=0$.
We have proved $\frac{d}{dt} \rho (t) \geq 0$.
\end{proof}

\subsection{Comparing with cylindrical solutions} \label{subsec compar cylinder}

Since in our construction of ancient solutions we need to compare solutions with
cylindrical solutions which are not strictly convex. Here we pay some attention to the
difference with Proposition \ref{prop comparison prin}.

Fixing a $J \in \{ 1, 2, \cdots, n-1 \}$, we define the cone $\Gamma^{n}_{+,J} =\{ (\lambda_1,
\cdots, \lambda_n )
\in \mathbb{R}^n, \, \lambda_i \geq 0 \text{ for } 1 \leq i\leq n-J, \text{ and } \lambda_i>0
\text{ for } i \geq n-J+1\}$.
Now we introduce another condition on function $f$.

\vskip .1cm
\noindent \textbf{Assumption $B_J$}. Given $J \in \{1, 2, \cdots, n-1 \}$ $f$ can be extended to a continuous
function on $\Gamma^n_{+,J}$ and $c_{J0}
= f(\underbrace{0, \cdots, 0}_{n-J},  1, \cdots, 1) >0$.

The principle curvatures of hypersurface $S^{J}(R) \times \mathbb{R}^{n-J}$
are  $\lambda_i=0$ for $i=1,\cdots,n-J$ and  $\lambda_i=\frac{1}{R}, i=n-J+1,\cdots,n$.
Hence if $f$ satisfies Assumption $B_J$, it is easy to check that $S^{J}(R(t)) \times \mathbb{R}^{n-J}$
with $R(t) =\sqrt{(R(0))^2 -2c_{J0}t} $ is a solution of (\ref{eq Andrews flow})
where the corresponding map is
\begin{equation} \label{eq cylder sol j}
\varphi_J (y,z,t) =(R(t)y,z) \quad \text{ for } (y,z) \in S^{J}(1)
\times \mathbb{R}^{n-J} \text{ and } t< T_J,
\end{equation}
 where $T_J=\frac{(R(0))^2}{2c_{J0}}$ is the singular time of the solutions.
 Note that the unit normal vector is $\nu(R(t)y,z,t) =(y,0)$.

\begin{remark} \label{rk comar cylind}
Fix a $J \in \{1, \cdots, n-1 \}$, if we further assume that $f$ in Proposition \ref{prop comparison prin}
satisfies Assumption $B_J$,
then the conclusion in the proposition still holds when we take hypersurface $\Sigma_{2,t}$
 to be $S^{J}(R(t))\times\mathbb{R}^{n-J}$. The proof is trivial.
\end{remark}

\section{Compactness theorem for Andrews' flow}\label{sec 3 condion E}

The proof of the following compactness theorem is similar to the proof of
\cite[Theorem 6.1]{An94} (compare to \cite[Theorem 17]{AMZ13}).

\begin{theorem} \label{thm compactness Andrews flow}
Let $\varphi_a: S^n \times (\alpha_a, 0] \rightarrow \mathbb{R}^{n+1}$ be a sequence of
solutions to Andrews' flow (\ref{eq Andrews flow}) with $\lim_{a \rightarrow \infty}
\alpha_a = \alpha_{\infty} <0$.
Assume that for any $t_* \in ( \alpha_{\infty},0]$ there is a constant $c_1(t_*) < \infty$
such that for  index $a$  large enough
\begin{equation} \label{eq cptness requirement}
\varphi_a(S^n \times\{t \}) \subset B^{n+1}_0 (c_1(t_*) ) \quad \text{ for all } t \in [t_*,0].
\end{equation}
We also assume that there is a constant $r_* >0$ such that open ball $B^{n+1}_0(r_*)$ is contained
in the convex hull of $\varphi_a(S^n  \times \{ 0 \})$ for all $a$.
Then there is a subsequence of $\{ \varphi_a \}$ which converges to a strictly convex solution
of Andrews' flow $\varphi_{\infty}: S^n \times (\alpha_
{\infty}, 0] \rightarrow \mathbb{R}^{n+1}$  in any $C^{\infty}$-topology uniformly
on any compact subset  of  $S^n \times (\alpha_{\infty}, 0]$.
\end{theorem}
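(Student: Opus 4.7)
The plan is to follow the strategy of \cite[Theorem 6.1]{An94} (cf.\ \cite[Theorem 17]{AMZ13}), moving from uniform geometric bounds to uniform curvature estimates, then to uniform parabolicity of the graph equation, and finally to $C^\infty$ compactness via standard parabolic regularity and Arzel\`a--Ascoli extraction. The argument naturally splits into four steps, each carried out on an arbitrary compact subinterval $[t_*,0] \subset (\alpha_\infty,0]$.

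\textbf{Step 1 (Uniform inner and outer radii).} The outer bound $\Sigma_{a,t} \subset B_0^{n+1}(c_1(t_*))$ is assumed. For an inner bound, I would apply Proposition \ref{prop comparison prin} with $\Sigma_{2,t}$ an evolving sphere of Andrews' flow that equals $S^n(r_*)$ at time $0$: since (\ref{eq Andrews flow}) preserves round spheres and distances are non-decreasing forward in time, tracking this inscribed sphere backward yields $B_0^{n+1}(\sqrt{r_*^2-2 c_0 t}) \subset \operatorname{conv}(\Sigma_{a,t})$ for $t \le 0$, with $c_0 = f(1,\ldots,1)$. Blaschke selection then produces a subsequence of convex bodies converging in Hausdorff distance, uniformly on compact subintervals, to convex bodies of nonempty interior.

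\textbf{Step 2 (Uniform curvature estimates).} I would next bound the speed $F_a := f(\lambda_{a,t,1},\ldots,\lambda_{a,t,n})$ uniformly from above by a Tso-type maximum principle argument applied to $F_a/(s_a - c)$, where $s_a$ is the support function of $\Sigma_{a,t}$ with respect to the origin and $c$ is chosen strictly below the inner radius from Step 1. Combined with Andrews' preservation of strict convexity (\cite[Theorem 1.1]{An07}) and the inner-ball bound, one obtains a strictly positive lower bound on the smallest principal curvature; the monotonicity and degree-one homogeneity of $f$, together with the upper bound on $F_a$, then force each principal curvature into a compact interval $[c_2(t_*), C_2(t_*)] \subset (0,\infty)$.

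\textbf{Step 3 (Regularity, bootstrap, and Arzel\`a--Ascoli).} With pinched curvatures the graph equation (\ref{eq Andrews flow in u func}) is uniformly parabolic and fully nonlinear, with the symmetric concave/convex structure provided by one of the alternatives of Assumption A. Krylov--Safonov gives uniform $C^\alpha$ control of $D^2 u_a$, Evans--Krylov upgrades this to $C^{2,\alpha}$, and differentiating the equation and iterating parabolic Schauder theory yields uniform $C^{k,\alpha}$ bounds on $[t_*+\varepsilon,0]$ for every $k$ and every $\varepsilon>0$. A diagonal Arzel\`a--Ascoli argument over an exhaustion of $(\alpha_\infty,0]$ then extracts a subsequence converging in $C^\infty_{\mathrm{loc}}$ to a smooth solution $\varphi_\infty$ of Andrews' flow on $S^n \times (\alpha_\infty,0]$, strictly convex by the lower bound of Step 2. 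The main obstacle is the global upper bound on $F_a$ in Step 2: Tso's argument must be made quantitative using only the geometric input of Step 1 and the particular hypotheses satisfied by $f$, with constants independent of $a$ on each $[t_*,0]$; this is also the step where the strict-convexity preservation of \cite[Theorem 1.1]{An07} enters to keep the scheme inside the class of strictly convex hypersurfaces on which $f$ is smooth.
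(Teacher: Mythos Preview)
Your overall strategy (geometric bounds $\to$ uniform parabolicity $\to$ Evans--Krylov $\to$ Schauder $\to$ Arzel\`a--Ascoli) matches the paper's, but the paper takes a more direct route that sidesteps your Step~2 entirely. Rather than bounding curvatures, the paper works throughout with the radial graph functions $r_a(\cdot,t)$ on $S^n(1)$ (defined by $r_a(z,t)=|\varphi_a(\bar\pi_{a,t}^{-1}(z),t)|$), which satisfy a scalar fully nonlinear parabolic equation. The outer-ball hypothesis gives the uniform $C^0$ bound on $r_a$; for the $C^1$ bound the paper simply notes that convex hulls decrease along the flow, so $B^{n+1}_0(r_*)\subset\operatorname{conv}(\Sigma_{a,t})$ for every $t\le 0$, and then the support-function identity $\langle\varphi_a,\nu_a\rangle = r_a^2/\sqrt{|\bar\nabla r_a|^2+r_a^2}\ge r_*$ bounds $|\bar\nabla r_a|$ directly. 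With $C^0$ and $C^1$ control and Assumption~A in hand, the paper invokes Evans--Krylov and parabolic Schauder for the radial equation and finishes by Arzel\`a--Ascoli plus diagonalization; no Tso-type speed estimate, no Blaschke selection, and no explicit curvature pinching are used. Your route via a Tso bound on $F_a$ is also viable and is closer to what \cite{An94} actually does, but it is longer than needed here.

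Two points in your outline need repair. In Step~1, the sphere-comparison is run in the wrong time direction: Proposition~\ref{prop comparison prin} gives nondecreasing distance \emph{forward} in time, so an inscribed $S^n(r_*)$ at $t=0$ does not by itself yield the larger sphere $S^n(\sqrt{r_*^2-2c_0 t})$ inscribed at $t<0$; the monotonicity of convex hulls (giving the \emph{same} $r_*$ at all earlier times) is both correct and sufficient for the gradient bound. In Step~2, the asserted uniform positive lower bound on the smallest principal curvature does not follow from strict-convexity preservation together with an inner-ball bound---a convex body with a nearly flat portion can have large inradius yet arbitrarily small $\lambda_1$. You should drop that claim; uniform parabolicity is obtained instead from the upper bound on $F_a$ together with the degree-one homogeneity and Assumption~A structure of $f$, as in \cite{An94}.
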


\begin{proof}
We define map $\bar{\pi}_{a,t}: S^n \rightarrow S^n(1)$ by $z=
\bar{\pi}_{a,t}(p) = \frac{\varphi_a(p,t)}{|\varphi_a(p,t)|}$, clearly the map is one-to-one and onto.
We define the radial distance function
$ r_a( \cdot,t): S^n(1) \rightarrow (0, \infty)$  by $ r_a(z,t) = | \varphi_a(\bar{\pi}_{a,t}
^{-1}(z),t) |$.
Let $\bar{g}$ and $ \bar{\nabla}$ be the Euclidean metric and the Riemannian connection on
$S^n(1)$, respectively.
For a symmetric matrix $B_{n \times n}$ with eigenvalue $b_1, \cdots, b_n$ we define function $F(B)
=f(b_1, \cdots, b_n)$.
By \cite[Lemma 3.2]{An94} we know that $r_a( \cdot, \cdot)$ satisfies the following
 parabolic equation on $S^n(1) \times (\alpha_a, 0]$.
\begin{equation} \label{eq parab r over sph}
\partial_t r_a(z,t) = -F \left ( -\frac{1}{\beta r_a(z,t)^2  } \bar{g}^* \left ( \bar{\nabla} \left (
 \beta \bar{\nabla} r_a(z,t) \right ) \right) + \frac{\operatorname{Id}}{r_a(z,t)} \right ),
\end{equation}
where $\beta =\frac{1}{\sqrt{ | \bar{\nabla}r_a(z,t)|^2 +r_a(z,t)^2}}$.

Fixed a $t_* \in  (\alpha_{\infty}, 0]$, by (\ref{eq cptness requirement}) the solution $r_{a}(\cdot, \cdot)$
is uniformly bounded on $S^n(1) \times [t_*,0]$ for all $a$, i.e.,  the length $r_{a}(z,t) \leq c_1(t_*)$.
Since under the flow the convex hull $\mathcal{K}_{a,t}$ of $\varphi_a (S^n \times\{t \})$
in $\mathbb{R}^{n+1}$ are decreasing in time $t$ (\cite[p.164, line 4]{An94}),
by the convexity of the hypersurfaces and  the assumption
$B^{n+1}_0(r_*) \subset \mathcal{K}_{a,0}$ we conclude that
\[
r_* \leq \langle \varphi_a(p,t), \nu_a(p,t) \rangle = \frac{r_a(z,t)^2}{\sqrt{ | \bar{
\nabla}r_a(z,t)|^2 +r_a(z,t)^2}}.
\]
Hence there is a constant $c_2 = c_2(t_*, r_*)$ such that
 $ |\bar{\nabla}r_a(z,t)| \leq c_2$ on  $S^n(1) \times [t_*,0]$ for all $a$.
This implies that equation (\ref{eq parab r over sph}) is uniformly parabolic on $S^n(1) \times [t_*,0]$.

Because of Assumption A1--A4  the estimates above allow us to apply the Evans-Krylov estimate for
parabolic equations (see, for example, \cite[Theorem 2, p.253]{Kr87}, \cite{An04} for $n=2$) to
(\ref{eq parab r over sph})
and conclude that there is an exponent $\alpha \in (0,1)$ and a constant $c_3 =c_3(t_*, r_*)$ such that
parabolic norm $\|r_a \|_{C^{2+ \alpha, 1+ \alpha/2}(S^n(1) \times [t_*,0]) } \leq c_3$ for all $a$.
Note that the uniform upper bound of high order H\"{o}lder norms $\| r_a \|_{C^{k+ \alpha,
(k+ \alpha)/2}(S^n(1) \times [t_*,0])}$ for each $k > 2$, follows
from the standard  parabolic Schauder theory.
It follows from Arzela-Ascoli theorem that there is a subsequence of $\{ r_a \}$ which converges
to some $r_{\infty}$ in $C^{\infty}$-topology uniformly on $S^n(1) \times [t_*, 0]$.

In the second part of the proof of \cite[Lemma 3.2]{An94} Andrews described how to recover
maps $\varphi_a$ and $\varphi_{\infty}$  (with strictly convex image) from radial length
function $r_a$ and $r_{\infty}$, respectively.
From the discussion it is clear that the subsequence
of $\{ \varphi_a \}$ converges to $\varphi_{\infty}$ in $C^{\infty}$-topology uniformly on $S^n
\times [t_*, 0]$ whenever the corresponding subsequence of $\{ r_a \}$  converges to $r_{\infty}$
smoothly and uniformly.
 Because $t_*$ is chosen arbitrary in $(\alpha_{\infty},0]$, by a diagonalization argument
 the theorem is proved.
\end{proof}

By running the argument about $r_a$ in the proof above only on compact subsets of $S^n(1)$,
it is easy to see the following compactness theorem with possiblely noncompact limits.
Here the base points used for taking the limit are implicitly chosen to be the origin.
 In the proof we  will need the following assumption to
assure the uniform ellipticity of the right hand side of partial differential equation
(\ref{eq parab r over sph}) for the sequence of solutions $r_a$ whose corresponding
hypersurfaces may have principal curvatures arbitrarily close to 0.

\vskip .1cm
\noindent \textbf{Assumption E}. $\frac{\partial f}
{\partial \lambda_i} >0$ on the closure $\overline{\Gamma_+^n} \setminus \{0\}$ for each $i$

\begin{theorem}  \label{thm compactness Andrews flow noncpt limits}
Assume $f$ in  (\ref{eq Andrews flow}) satisfies Assumption E.
Let $\varphi_a: S^n \times (\alpha_a, 0] \rightarrow \mathbb{R}^{n+1}$ be a sequence of
solutions to Andrews' flow (\ref{eq Andrews flow}) with $\lim_{a \rightarrow \infty}
\alpha_a = \alpha_{\infty} <0$. Let functions $r_a( \cdot, \cdot)$ be defined as in the proof
of Theorem \ref{thm compactness Andrews flow} and
let  $\Omega \subset S^n(1)$ be an open subset.
Choose a sequence of smooth compact manifolds with
boundary $\Omega_k \subset \Omega, \, k \in
\mathbb{N}$,  which form an exhaustion of $\Omega$ in the sense that $\Omega_k \subset
\Omega_{k+1}$ and $ \cup_{k} \Omega_k = \Omega$.
 Assume that for any $t_* \in ( \alpha_{\infty},0]$ and $k$ there is a constant $c_1(t_*, k)
< \infty$ such that  for index $a$ large enough
\begin{equation} \label{eq cptness requirement noncpt limit}
r_a(z,t) \leq c_1(t_*, k)  \quad \text{ for all } (z ,t) \in \Omega_k \times [t_*,0].
\end{equation}
We also assume that there is a constant $r_* >0$ such that open ball $B^{n+1}_0(r_*)$ is
contained in the convex hull of $\varphi_a(S^n \times \{ 0 \})$ for all $a$.
Then there is a subsequence of $\{ \varphi_a \}$ which converges to a convex solution
of Andrews' flow $\varphi_{\infty}: \cup_{t \in (\alpha_
{\infty}, 0]} ( \tilde{\Omega}^n_t \times \{t \} )  \rightarrow \mathbb{R}^{n+1}$ in
$C^{\infty}$-topology uniformly on any compact subset
of  $ \cup_{t \in (\alpha_{\infty}, 0]} ( \tilde{\Omega}_t \times \{t \} ) $.
Here domain $\tilde{\Omega}_t = \bar{\pi}_{\infty}( \cdot, t)^{-1}(\Omega) \subset S^n$ where
 map $\bar{\pi}_{\infty}(p,t) = \frac{\varphi_{\infty}(p,t)}{|\varphi_{\infty}(p,t)|} \in  S^n(1) $
 for $p \in \tilde{\Omega}_t$.
\end{theorem}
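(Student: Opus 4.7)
My plan is to replicate the argument of Theorem \ref{thm compactness Andrews flow} on each compact piece $\Omega_k$ in the radial picture, using Assumption E to keep the PDE (\ref{eq parab r over sph}) uniformly parabolic as some principal curvatures may approach the boundary of $\Gamma_+^n$ in the limit. Fix $t_* \in (\alpha_\infty, 0]$ and an index $k$. Hypothesis (\ref{eq cptness requirement noncpt limit}) provides $r_a \leq c_1(t_*, k)$ on $\Omega_k \times [t_*, 0]$. The assumption $B^{n+1}_0(r_*) \subset \mathcal{K}_{a,0}$ combined with the monotone decrease of convex hulls along the flow yields the global lower bound $r_a \geq r_*$ on $S^n(1) \times (\alpha_a, 0]$. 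Substituting these into the support-function identity
\[
r_* \leq \langle \varphi_a, \nu_a \rangle = \frac{r_a^2}{\sqrt{|\bar{\nabla} r_a|^2 + r_a^2}}
\]
produces a uniform gradient bound $|\bar{\nabla} r_a| \leq c_2(t_*, k, r_*)$ on $\Omega_k \times [t_*, 0]$, exactly as in Theorem \ref{thm compactness Andrews flow}.

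The decisive new step is uniform parabolicity of (\ref{eq parab r over sph}). Since $f$ is homogeneous of degree one, its partial derivatives $\partial f/\partial \lambda_i$ are homogeneous of degree zero and hence depend only on the direction of $\lambda$. Under Assumption E these partial derivatives extend continuously and remain strictly positive on the closed cone $\overline{\Gamma_+^n} \setminus \{0\}$, so by compactness of its intersection with the unit sphere they admit a uniform positive lower bound $\delta$ on all of $\overline{\Gamma_+^n} \setminus \{0\}$. Each solution $\Sigma_{a,t}$ is strictly convex by \cite[Theorem 1.1]{An07}, hence its curvature vector $\lambda$ lies in $\Gamma_+^n \setminus \{0\}$ pointwise, so the ellipticity of (\ref{eq parab r over sph}) for every $a$ is uniformly bounded below by $\delta$ (up to the factor involving $\beta$ and $r_a$, which is pinched between explicit constants by the $C^0$, $C^1$ bounds above); the matching upper bound comes from smoothness of $f$.

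Equipped with these bounds, I would apply the interior parabolic Evans-Krylov estimate (using the concavity/convexity supplied by Assumption A) on a subdomain $\Omega_{k-1} \subset \Omega_k$ to obtain a uniform $C^{2+\alpha, 1+\alpha/2}$ bound for $r_a$, and bootstrap via parabolic Schauder theory to uniform $C^{m, m/2}$ bounds for every $m$, exactly as in the proof of Theorem \ref{thm compactness Andrews flow}. Arzela-Ascoli extracts a $C^\infty$-convergent subsequence on $\Omega_{k-1} \times [t_*, 0]$, and a standard diagonal argument over $k$ and over $t_* \downarrow \alpha_\infty$ delivers a limit $r_\infty \in C^\infty(\Omega \times (\alpha_\infty, 0])$. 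Following Andrews' reconstruction procedure from the latter half of the proof of \cite[Lemma 3.2]{An94}, I recover $\varphi_\infty$ from $r_\infty$ on the domain $\cup_{t \in (\alpha_\infty, 0]}(\tilde{\Omega}_t \times \{t\})$ with $\tilde{\Omega}_t$ as defined in the statement; convexity passes to the $C^2$ limit, though strict convexity may fail, which explains why the theorem asserts only a convex limit.

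I expect the main obstacle to be the uniform parabolicity in the second paragraph. Without Assumption E the equation (\ref{eq parab r over sph}) could become merely degenerately parabolic along the sequence (for instance near cylindrical pieces where some $\lambda_i \to 0$), so the interior Evans-Krylov and Schauder estimates would fail uniformly in $a$, and the limit hypersurface, which in the intended application to Theorem \ref{thm main} is expected to be a cylinder $S^J \times \mathbb{R}^{n-J}$, could not be captured by the present PDE-based compactness argument. Assumption E is precisely what rules out this degeneration.
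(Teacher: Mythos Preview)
Your proposal is correct and follows exactly the approach the paper indicates: the paper does not write out a separate proof but simply says to run the argument of Theorem~\ref{thm compactness Andrews flow} about $r_a$ on compact subsets $\Omega_k$ of $S^n(1)$, invoking Assumption~E to guarantee uniform parabolicity of (\ref{eq parab r over sph}) when principal curvatures may approach zero. Your explanation of why Assumption~E yields a uniform lower ellipticity constant (degree-zero homogeneity of $\partial f/\partial\lambda_i$ plus compactness of the unit sphere in $\overline{\Gamma_+^n}\setminus\{0\}$) and your use of interior Evans--Krylov on $\Omega_{k-1}\subset\Omega_k$ followed by diagonalization make the paper's one-line sketch precise.
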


\section{Construction of ancient solutions}

In this section we prove Theorem \ref{thm main} about the existence of ancient solutions and show
their forward limits are a round point.
At the end we also discuss the non-collapsing property of the solutions.
We leave the properties of the backwards limits to the next section.

\subsection{Construction of the initial hypersurfaces} \label{subsec initial hyper}

Since the existence result in \cite[Theorem 1.1]{An07} requires the initial hypersurfaces to be
 strictly convex,  we need to modify the usual construction of
 initial hypersurfaces (e.g., \cite[p.597]{HH16}) so that their principal curvature are positive everywhere.

Fix an integer $J \in \{ 1, 2, \cdots, n-1 \}$. For each $a \in \mathbb{N}$ we construct a smooth closed
strictly convex hypersurface $\Sigma^n_{J,a,0}
\subset \mathbb{R}^{n+1}$ as follows.
Let $x=(x^1, \cdots, x^{n+1}) =(y,z)$ be coordinates on $\mathbb{R}^{n+1}$ where $y^1 =x^{1},
\cdots, y^{J+1} =x^{J+1}$ and $z^1 =x^{J+2}, \cdots, z^{n-J}=x^{n+1}$.
We define $s =\sqrt{(z^1)^2 + \cdots + (z^{n-J})^2}$.
Fix a constant $\epsilon_0 \in (0,1)$, we choose a rotationally symmetric function $\psi_a$ satisfying
\[
\psi_a(z) =\tilde{\psi}_a(s) =\begin{cases} 1+ \epsilon_0 \quad \text{ if } s=0, \\
1 \quad \text{ if } s=a, \\

0 \quad \text{ if } s=a+1.
\end{cases}
\]
We further require that $\tilde{\psi}_a$ satisfies (i) $\tilde{\psi}_a^{\prime}(s) = \frac{d
\tilde{\psi}_a}{ds} < 0$ for $s \in (0, a+1]$,
(ii) $\tilde{\psi}_a^{\prime \prime}(s) = \frac{d^2 \tilde{\psi}_a}{ds^2}< 0$ for $s \in [0, a+1]$,
(iii) $\tilde{\psi}_a(\tilde{s} +a) =\tilde{\psi}_*(\tilde{s})$ where $\tilde{s} \in [0,1]$,  is a function
independent of $a$, and (iv) embedding
\[
\varphi_{J,a,0}: S^{J} (1) \times B_{0}^{n-J}(a+1) \rightarrow \mathbb{R}^{J+1}
\times \mathbb{R}^{n-J},   \quad \varphi_{J,a,0}(y,z) = (\psi_a(z) y, z)
\]
defines a smooth (including at $|z|=a+1$) closed hyperesurface $\Sigma^n_{J,a,0}$.
Intuitively  $\Sigma_{J,a,0}$ is constructed from capping an almost-cylinder $\{(\psi_a(z) y, z),
\, (y,z) \in S^J(1) \times B^{n-J}_0(a) \}$ off in a $z$-rotationally-symmetric and strictly convex way
(compare with \cite[p.597]{HH16}).
It is easy to see the existence of such functions $\tilde{\psi}_a(s)$.

Now we show that each hypersurface $\Sigma_{J,a,0}$  is strictly convex by computing its second
fundamental form. Because of the $O_{J+1} \times O_{n-J}$-rotational  symmetry,
we use local coordinate $(\tilde{y},z)= (y^1, \cdots, y^{J},
z^1, \cdots, z^{n-J}) \in B^{J}_0 (1) \times B_0^{n-J}(a+1)$ for  $S^{J} (1) \times B_{0}^{n-J}(a+1)$
by taking  $y^{J+1} =\sqrt{1- (y^1)^2- \cdots -( y^{J})^2}$.
Note that $(\tilde{y}, y^{J+1}) \in  S^{J} (1)$.
Let $\{e_{\bar{A}}, \, \bar{A}=1, \cdots, n+1 \}$ be the standard basis of $\mathbb{R}^{n+1}$.
 We have the tangent vectors for $\Sigma_{J,a,0}$ (not necessarily unit)
\begin{align*}
& \tilde{e}_i (\tilde{y},z) =\frac{\partial}{\partial y^i}= \tilde{\psi}_a(s)e_i - \frac{\tilde{\psi}_a
(s)y^i}{y^{J+1}} e_{J+1}, \quad i=1, \cdots, J, \\
& \hat{e}_p (\tilde{y},z) =\frac{\partial}{\partial z^p}= \frac{ \tilde{\psi}_a^{\prime}(s)z^p}{s}
 \sum_{i=1}^{J+1} y^i e_i +  e_{J+1+p}, \quad p=1, \cdots, n-J.
\end{align*}
The unit normal vector of the hypersurface is given by
\[
\nu(\tilde{y},z) = \frac{1}{\sqrt{1+(\tilde{\psi}_a^{\prime}(s))^2}} \left (  \sum_{i=1}^{J+1}
y^i e_i - \frac{\tilde{\psi}_a^{\prime}(s)}{s} \sum_{p=1}^{n-J} z^p e_{J+1+p} \right ) .
\]

Because of the symmetry, below we compute the second fundamental form
at point where $\tilde{y}=0$. We have the directional derivatives in $\mathbb{R}^{n+1}$
\begin{align*}
& D_j \tilde{e}_i (0, z)=\left . \frac{\partial}{\partial y^j} \tilde{e}_i (\tilde{y},z) \right |_{\tilde{y}=0}
=- \tilde{\psi}_a \delta_{ij} e_{J+1} , \quad j=1, \cdots, J,   \\
&D_q \tilde{e}_i (0, z) = \frac{\tilde{\psi}_a^{\prime} z^q}{s} e_i ,  \quad q=1, \cdots, n-J, \\
&D_j \hat{e}_p (0, z) = \frac{\tilde{\psi}_a^{\prime} z^p}{s} e_j  , \\
& D_q \hat{e}_p (0, z) = \left ( \frac{\tilde{\psi}_a^{\prime \prime} z^pz^q}{s^2} + \frac{ \tilde{\psi}
_a^{\prime} (\delta_{pq} s^2 -z^pz^q)}{s^3 } \right ) e_{J+1},
\end{align*}
and the unit normal vector
\[
\nu(0, z) = \frac{1}{\sqrt{1+(\tilde{\psi}_a^{\prime})^2}} \left ( e_{J+1}  - \frac{\tilde{\psi}
_a^{\prime}}{s}  \sum_{p=1}^{n-J} z^p e_{J+1+p} \right ).
\]
Hence the second fundamental form $h(0,z) = \left [ \begin{matrix} - \langle D_j \tilde{e}_i , \nu \rangle
&  - \langle D_q \tilde{e}_i , \nu \rangle   \\
 - \langle D_j \tilde{e}_p, \nu \rangle  &  - \langle D_q \tilde{e}_p, \nu \rangle
\end{matrix} \right ]$ is given
by block matrix
\begin{equation}\label{eq Omega matrix in block}
\left [ \begin{matrix}
\frac{\tilde{\psi}_a}{\sqrt{1+(\tilde{\psi}_a^{\prime})^2}} I_{J} & 0 \\
0 &  -\frac{1}{s^3 \sqrt{1+(\tilde{\psi}_a^{\prime})^2}}
  \Omega_{{(n-J) \times (n-J)}}
\end{matrix} \right ],
\end{equation}
where $I_{J}$ is the identity matrix and matrix $\Omega$ is given by
\[
\left [ \begin{matrix}
s \tilde{\psi}_a^{\prime \prime} (z^1)^2 + \tilde{\psi}_a^{\prime} (s^2 - (z^1)^2)  &  (s
\tilde{\psi}_a^{\prime \prime} - \tilde{\psi}_a^{\prime}) z^1z^2 & \cdots &  (s \tilde{\psi}_a^{
\prime \prime} - \tilde{\psi}_a^{\prime} )  z^1z^{n-J}  \\
 (s \tilde{\psi}_a^{\prime \prime}- \tilde{\psi}_a^{\prime}) z^2z^1 &  s \tilde{\psi}_a^{\prime \prime}
 (z^2)^2 + \tilde{\psi}_a^{\prime} (s^2 - (z^2)^2)  & \cdots &  (s \tilde{\psi}_a^{\prime \prime}
 - \tilde{\psi}_a^{\prime} )  z^2z^{n-J}  \\
  \vdots &  \vdots & \vdots &  \vdots  \\
(s \tilde{\psi}_a^{\prime \prime} - \tilde{\psi}_a^{\prime} )  z^{n-J}z^1   &  (s \tilde{\psi}_a^{
\prime \prime} - \tilde{\psi}_a^{\prime}) z^{n-J}z^2 & \cdots &  s \tilde{\psi}_a^{\prime \prime}
(z^{n-J})^2 + \tilde{\psi}_a^{\prime} (s^2 - (z^{n-J})^2)
\end{matrix} \right ].
\]
We can rewrite the matrix above as
\begin{equation} \label{eq small matrix label}
\tilde{\psi}_a^{\prime} [s^2 I_{n-J} - (z^1, \cdots, z^{n-J})^T
(z^1, \cdots, z^{n-J})]  +s \tilde{\psi}_a^{\prime \prime}  [(z^1, \cdots,
z^{n-J})^T (z^1, \cdots, z^{n-J})].
\end{equation}
It is easy to see that each matrices in the square bracket $[ \,\, ]$ above is negative semi-definite
and to argue that the matrix in (\ref{eq small matrix label}) is negative definite when $z \neq 0$.
It takes a little effort to argue the lower block matrix in (\ref{eq Omega matrix in block}) is positive
definite at $z=0$, actually the block matrix equals to $-\tilde{\psi}_a^{\prime \prime} (0) I_{n-J}$
at $z=0$.

\vskip .1cm
Thus we can conclude the following.

\noindent (C1) Hypersurfaces $\Sigma_{J,a,0}$ are $O_{J+1} \times O_{n-J}$-rotationally symmetric
and strictly convex.

\noindent (C2) The hypersurfaces $\Sigma_{J,a,0}$ are uniformly $(n-J + 1)$-convex, in the sense that
their principal curvatures and mean curvatures satisfy $\lambda_{J,a,0,1} + \cdots +\lambda_{J,a,0,n-J+1}
\geq  \beta H_{J,a,0}$ for some constant $\beta = \beta(n) > 0$ independent of $a$.
To see this at a point where $\tilde{y}=0$, note that length $|\tilde{e}_i(0,z)| = \tilde{\psi}_a(s)$,
hence the principal curvatures induced from the eigenvalues in the first block matrix
in (\ref{eq Omega matrix in block}) are $\frac{1}{\tilde{\psi}_a \sqrt{1+
(\tilde{\psi}_a^{\prime} )^2}}$ of multiplicity $J$.
By the choice of $\tilde{\psi}_a$ we know that $|\tilde{\psi}_a
(\tilde{s}+a) \cdot \tilde{\psi}_a^{\prime}(\tilde{s}+a)| \geq c_1$ for all $a$ and $\tilde{s} \in [0,1]$
 where $c_1>0$ is a constant independent of $a$ and $\tilde{s}$,
 hence these principal curvatures on $\Sigma_{J,a,0}$ satisfy
$c_2 \leq \lambda_{J,a,0,n-J+1} \leq \cdots \leq  \lambda_{J,a,0,n} \leq c_3$ for some
constant $c_2$ and $c_3$ independent of $a$. Hence this implies that there is a uniform
lower bound for $\lambda_{J,a,0,1} + \cdots +\lambda_{J,a,0,n-J+1}$.

Since $|\hat{e}_p(0,z)| = \sqrt{\frac{ (\tilde{\psi}_a^{\prime}(s) z^p)^2}{s^2} +1}$,
the principal curvatures $\lambda_{J,a,0,p}, \, p=1, \cdots, n-J$, are
the eigenvalues of matrix (\ref{eq small matrix label}) multiplied by
$ -\frac{1}{s^3 \sqrt{1+(\tilde{\psi}_a^{\prime})^2}} \cdot \frac{1}{(\tilde{\psi}_a^{\prime})^2
\frac{(z^p)^2}{s^2}+1 }$. Hence these principal curvatures are uniformly bounded from
above by some constant $c_4$ independent of $a$. This implies that there is a uniform upper bound
for the mean curvature $H_{J,a,0}$. Hence the uniform $(n-J + 1)$-convexity is proved.

\noindent (C3) The hypersurfaces $\Sigma_{J,a,0}$ are uniformly non-collapsed from the interior
on the scale of $f$ in the sense that there is a constant $\kappa >0$
independent of $a$ such that for every $x \in
\Sigma_{J,a,0}$ there is an interior sphere tangent to $\Sigma_{J,a,0}$
at $x$ with radius at least $\frac{\kappa}{f(\lambda_{J,a,0,1}(x), \cdots, \lambda_{J,a,0,n}(x))}$.
This is due to the uniform lower bound of function $f(\lambda_{J,a,0,1} (\cdot), \cdots, \lambda_{J,a,0,n}
 (\cdot))$ on  $\Sigma_{J,a,0}$, which is a consequence of Assumption $B_J$.

\subsection{Construction of approximate ancient solutions} \label{subsec approxima ancient}
Let $\Sigma_{J,a,t}^n$ be the Andrews' flow starting from $\Sigma_{J,a,0}$ constructed in \S
\ref{subsec initial hyper} at $t = 0$ (\cite[Theorem 1.1]{An07}).
We know that the flow $\Sigma_{J,a,t}$ collapses to a round point in finite time $T_{J,a}$
and that  $\Sigma_{J,a,t}$ is  $O_{J+1} \times O_{n-J}$-rotationally symmetric by the uniqueness
of the solutions.

We assume that $f$ in the Andrews' flow satisfies Assumption $B_J$.
 Using sphere solution $S^n(R_1(t))$ with $R_1(0) =1$ as an interior barrier and
cylinder solution $S^J(R_2(t)) \times \mathbb{R}^{n-J}$ with $R_2(0) =1+2 \epsilon_0$  as
an exterior barrier,  by  Proposition \ref{prop comparison prin} and Remark \ref{rk comar cylind}
we see that times $T_{J,a}$ are comparable to one in the sense
  $\frac{1}{2 f(1, \cdots, 1)} \leq T_{J,a} \leq \frac{(1+2 \epsilon_0)^2}{2c_{J0}} $.
 Note that the bounds are independent of $a$.

Let $\hat{\Sigma}^n_{J, a, \hat{t}}, \hat{t} \in [\hat{T}_{J,a}, 0)$ (where $\hat{T}_{J,a} < -1$
denotes the new initial time) be the sequence of solutions of (\ref{eq Andrews flow}) obtained by
parabolic rescaling of $\Sigma_{J,a,t}$ with $\hat{t} =\Lambda_{J,a}^2(t- T_{J,a})$ and
 $\hat{\Sigma}_{J, a, \hat{t}} = \Lambda_{J,a} \Sigma_{J, a, t}$ for some $\Lambda_{J,a} >0$.
Define the major radius and the minor radius of hypersurface $\hat{\Sigma}_{J, a, \hat{t}}$  by
\begin{equation} \label{eq amjor minor rad}
A_{J,a}(\hat{t})
= \max_{x \in \hat{\Sigma}_{J, a, \hat{t}}} \left ( \sum_{i=J+2}^{n+1} (x^i)^2 \right )^{1/2},
\quad \text{ and } \quad       B_{J,a}( \hat{t}) = \max_{x \in \hat{\Sigma}_{J,a, \hat{t}}}
\left ( \sum_{i=1}^{J+1} (x^i)^2 \right )^{1/2},
\end{equation}
respectively.
We choose the scaling factor $\Lambda_{J,a}$ such that the ratio $\frac{A_{J,a}(\hat{t})}{
B_{J,a}( \hat{t}) }$  equals $2$ for the first time at $ \hat{t}=-1$.
In the following we will use the ideas from \cite[p.598]{HH16} to prove Claim D1 and D2 stated below.

 \vskip .1cm
Claim D1. There exists a constant $C < 1$ independent of index $a$ such that diameters
\begin{equation}
C \leq \operatorname{diam} ( \hat{\Sigma}_{J,a,-1} ) \leq C^{-1}. \label{eq diam rescaled bdd}
\end{equation}

Proof of Claim D1. Since the flows starting from  $\hat{\Sigma}_{J, a, -1}$
 become extinct in one unit of time, the lower bound
of the diameters follows from comparison with a spherical solution as an exterior barrier of the flows.
For the upper bound, using the $O_{J+1} \times O_{n-J}$-rotational symmetry, $\frac{A_{J,a}(-1)}
{B_{J,a}(-1)} = 2$, and the convexity, we can construct a sphere of radius which is a fraction
(independent of $a$) of the diameters as an interior barrier of $\hat{\Sigma}_{J,a,-1}$.
Since the flow becomes extinct in one unit of time,
by Proposition \ref{prop comparison prin}  this barrier sphere solution extincts within one unit
of time and so the sphere has a radius less or equal  to $\sqrt{2f(1, \cdots, 1)}$.
Hence we have the required upper bound on the diameters.

 \vskip .1cm
Now we show that $\{ \hat{\Sigma}^n_{J, a, \hat{t}} \}_{a=1}^{\infty}$ is a sequence of approximate
ancient solutions of  (\ref{eq Andrews flow}) by proving

Claim D2. $\lim_{a \rightarrow \infty} \hat{T}_{J,a} = -\infty$.

Proof of Claim D2. Fix a time $\hat{t}_0 < -1$. Using the $O_{J+1} \times O_{n-J}$-rotational symmetry,
 $\frac{A_{J,a}(\hat{t}_0)} {B_{J,a}(\hat{t}_0)} \geq 2$, and the convexity,
 we can put a sphere of radius $\frac{B_{J,a}(\hat{t}_0)}{4}$ inside $\hat{\Sigma}_{J,a,
\hat{t}_0}$ at distance $\frac{A_{J,a}(\hat{t}_0)}{2}$ away from the origin.
The sphere is centered on the plane $\{0 \} \times \mathbb{R}^{n-J}$.
Thus by Proposition \ref{prop comparison prin} it takes function $A_{J,a}(\cdot)$ a time period
$|\hat{t}_0 -\hat{t}_*|$ of at least
$\frac{B_{J,a}(\hat{t}_0)^2}{32}$ to decrease from $A_{J,a}(\hat{t}_0)$ to
$\frac{1}{2}A_{J,a}(\hat{t}_0) =A_{J,a}(\hat{t}_*)$.
 On the other hand, $B_{J,a}(\hat{t})$ decreases with time and from Claim D1
we know that $B_{J,a}(-1) \geq \delta$ for some $\delta > 0$ independent of $a$.
Thus, it takes quotient function $\frac{A_{J,a}(\cdot)}{B_{J,a}(\cdot)}$ a time period of
at least $\frac{\delta^2}{32}$ to decrease from $\frac{A_{J,a}(\hat{t}_0)}{B_{J,a}(\hat{t}_0)}$
to $\frac{1}{2} \frac{A_{J,a}(\hat{t}_0)}{B_{J,a}(\hat{t}_0)}$.
Since $\frac{A_{J,a}(\hat{T}_{J,a})}{B_{J,a}(\hat{T}_{J,a})}  \rightarrow \infty$
as $a \rightarrow \infty$ by the construction of
initial hypersurface $\Sigma_{J,a,0}$ and
$\frac{A_{J,a}(-1)}{B_{J,a}(-1)} = 2$,  the claim follows.

\subsection{Limiting the  approximate ancient  solutions} \label{subsec limiting process}
First we verify the assumption after (\ref{thm compactness Andrews flow}) for sequence $\{ \hat{\Sigma}^n_{J,
a, \hat{t}} \}, \hat{t} \in [\hat{T}_{J,a}, -1]$. Recall from the proof at the end of
\S\ref{subsec approxima ancient} we have $B_{J,a}(\hat{t})  \geq B_{J,a}(-1)  \geq \delta$ for all
$\hat{t} \leq -1$ where $\delta > 0$ is a constant independent of $a$.
Since $ \hat{\Sigma}_{J, a, \hat{t}}$ is $O_{J+1} \times O_{n-J}$-rotationally symmetric and convex,
the existence of a fixed size ball inside $\hat{\Sigma}^n_{J, a, \hat{t}}$
 follows from $\frac{A_{J,a}(\hat{t})} {B_{J,a}(\hat{t})} \geq 2$.

To see the bound in (\ref{thm compactness Andrews flow}), we fix a $\hat{t}_* < -1$.
We claim that there is a $c_1(\hat{t}_*)>0$ such that  $B_{J,a}(\hat{t}) \leq c_1(\hat{t}_*)$ for all $J$,
$a$, and $\hat{t}\in [\hat{t}_*,-1]$. To see the claim by contradiction,
we assume that there are sequences $\{a_k \}$ and $\{ \hat{t}_k
\in [\hat{t}_*, -1]\}$  such that  $B_{J,a_k}(\hat{t}_k) \rightarrow \infty$ as $k \rightarrow \infty$,
then the convex hull of $\hat{\Sigma}_{J, a_k, \hat{t}_k}$ contains ball $B_0(\rho_{k})$
where radius $\rho_{k} \rightarrow \infty$.
Applying Proposition \ref{prop comparison prin} to $\hat{\Sigma}_{J,
a, \hat{t}}, \hat{t} \in [\hat{t}_k, -1]$ and the solution with initial surface $S^n(\rho_{k})$,
we conclude that $\hat{t}_k \rightarrow -\infty$ which is a contradiction.

We make another claim that there is a constant $c_2(\hat{t}_*)>0$ such that  $A_{J,a}(\hat{t})
 \leq c_2(\hat{t}_*)$ for all $J$,  $a$, and $\hat{t}\in [\hat{t}_*,-1]$.
To see the claim by contradiction, it follows from $\delta
\leq B_{J,a}(\hat{t}) \leq c_1(\hat{t}_*)$ that we may assume that
there is a sequence  of $\{a_k \}$ and $\{ \hat{t}_k
\in [\hat{t}_*, -1]\}$  such that  $\frac{A_{J,a_k}(\hat{t}_k)}{ B_{J,a_k}(\hat{t}_k)} \rightarrow
\infty$ as $k \rightarrow \infty$, the uniform finite existence time $[\hat{t}_*,-1]$ used for
$\hat{\Sigma}_{J, a, \hat{t}}$ contradicts with the proof of Claim D2, hence the claim is proved.
By the definition of $A_{J,a}(\hat{t})$
and $B_{J,a}(\hat{t})$ we conclude that (\ref{thm compactness Andrews flow}) holds
for sequence  $\{ \hat{\Sigma}_{J,  a, \hat{t}} \}, \, \hat{t} \in [\hat{T}_{J,a}, -1]$.

Now we may use Theorem \ref{thm compactness Andrews flow} to conclude that
sequence $\hat{\Sigma}^n_{J, a, \hat{t}}, \hat{t} \in [\hat{T}_{J,a}, -1]$, subconverges to
some strictly convex  limit  $\hat{\Sigma}^n_{J, \infty, \hat{t}}, \hat{t} \in (-\infty, -1]$
in $C^{\infty}$-topology.
  In the discussion above we may move time $-1$ closer and closer
to $0$, hence by a diagonalization argument we may conclude that we get a limit solution
$\hat{\Sigma}_{J,  \infty, \hat{t}}, \, \hat{t} \in (-\infty, 0)$ of  (\ref{eq Andrews flow}).
$\hat{\Sigma}_{J,  \infty, \hat{t}}$ becomes a round point at origin as $\hat{t} \rightarrow 0$
(\cite[Theorem 1.1]{An07}). The limit is not a round sphere solution because
$\frac{A_{J,a}(-1)}{B_{J,a}(-1)} = 2$ for $\hat{\Sigma}_{J,  \infty, \hat{t}}$.

By the $C^{\infty}$-convergence and the symmetry of  $\hat{\Sigma}_{J, a, \hat{t}}$
  it is clear that $\hat{\Sigma}_{J,  \infty, \hat{t}}$  is $O_{J+1}
\times O_{n-J}$-rotationally symmetric.
This proves Theorem \ref{thm main} about the existence of the ancient soultions of
flow (\ref{eq Andrews flow}).

\begin{remark} $($i$)$ Here we assume that $f$ in Theorem \ref{thm main} is concave.
From the uniformly non-collapsing property of $\Sigma_{J,a,0}$ given in
(C3) near the end of \S \ref{subsec initial hyper} and the scaling invariance of the non-collapsing
property, we may apply \cite[Corollary 3]{ALM13} and conclude that the solutions
$\hat{\Sigma}_{J,a,\hat{t}}$ are uniformly non-collapsed from the interior
on the scale of $f$ for all $a$ and $\hat{t}$. Hence their $C^\infty$-limit
$\hat{\Sigma}_{J,\infty,\hat{t}}$ are uniformly non-collapsed from the interior
on the scale of $f$ for all $\hat{t}$.

  $($ii$)$ Note that for solution $\hat{\Sigma}_{J,  \infty, \hat{t}}$ the ratio of major and
  minor radius  $\frac{A_{J,  \infty}(\hat{t})}{B_{J, \infty}(\hat{t})}  \rightarrow \infty$
  as  $\hat{t} \rightarrow -\infty$.

   $($iii$)$   We define a reflection map $\mathcal{R}: \mathbb{R}^{n+1} \rightarrow \mathbb{R}^{n+1}$ by
\begin{equation} \label{eq reflect}
\mathcal{R}(x^1, \cdots, x^{J+1}, x^{J+2}, \cdots, x^{n+1}) = (x^1, \cdots, x^{J+1}, -x^{J+2},
\cdots, -x^{n+1}).
\end{equation}
It is clear that $\hat{\Sigma}_{J,  \infty, \hat{t}}$ is invariant under the reflection,
$\mathcal{R}(\hat{\Sigma}_{J,  \infty, \hat{t}})=\hat{\Sigma}_{J,  \infty, \hat{t}}$.

  $($iv$)$  Note that if we do not care about the properties in (C2) and (C3), the above
 construction of ancient solutions go through without any change by
 using $\{(y,z ) \in \mathbb{R}^{n+1}, \, |y|^2 + \frac{|z|^2}{a^2} =1 \}$
 as the initial hypersurfaces  $\Sigma_{J,a,0}$.
\end{remark}

\section{The backward asymptotic limits of the ancient solutions}

In this section we consider the backward asymptotic limits of the solutions constructed in
 Theorem \ref{thm main}. In particular, we give a proof of Proposition
 \ref{prop asym back limit cylinder}.  In this section $\hat{\Sigma}^n_{J,  \infty, \hat{t}}, \,  \
 \hat{t} \in ( - \infty, 0)$ denotes the ancient solutions constructed in Theorem \ref{thm main}.

\subsection{Proof of cylinder type  backward asymptotical limits}

We first define the rescaling of  $\hat{\Sigma}^n_{J,  \infty, \hat{t}}, \,  \hat{t} \in ( - \infty, 0)$,
which will be used for taking backward asymptotic limits.
Since $f$ in (\ref{eq Andrews flow}) is homogeneous of degree
one, the parabolically-rescaled $\Sigma^n_{J,K,t}= K^{-1} \hat{\Sigma}_{J,
\infty, K^2 t}$ is still a solution of  (\ref{eq Andrews flow}) for each $K >0$.
We consider the limits of this family of solutions  when $K \rightarrow \infty$.

Let $A_{J,K}(t)$ and $B_{J,K}(t)$ be the major and minor radius  of hypersurface
$\Sigma_{J,K,t}$  as defined in (\ref{eq amjor minor rad}), respectively.
From the proof of Theorem \ref{thm main} we have $\frac{A_{J,K}(t)}{B_{J,K}(t)}
\geq 2$ for $t \leq -1$ and $K \geq 1$, and
$\frac{A_{J,K}(-1)}{B_{J,K}(-1)}  \rightarrow \infty$ as $K \rightarrow \infty$.

Fix a $t_* < -1$, we claim that there is a positive constant $c_1(t_*)$ independent of
 $K \geq 1$ such that $c_1(t_*) < B_{J,K}(t) < c_1(t_*)^{-1}$ for all $t \in [t_*,-1]$ and $K \geq 1$.
First we show that $B_{J,K}(t)$ has a uniform upper bound.
Using the $O_{J+1} \times O_{n-J}$-rotational symmetry, $\frac{A_{J,K}(t)}
{B_{J,K}(t)} \geq 2$, and the convexity, we can construct a sphere of radius
$\frac{1}{n} B_{J,K}(t)$ centered at $0$ as an interior barrier of $\Sigma_{J,K,t}$.
Since the flow starting from $\Sigma_{J,K,t}$ becomes singular within time amount $t_*$,
from Proposition \ref{prop comparison prin} we get that the radius
$\frac{1}{n}B_{J,K}(t)$ is bounded from above by a multiple of $|t_*|^{1/2}$.

To see that $B_{J,K}(t)$ has a uniform lower bound,  using the $O_{J+1} \times O_{n-J}$-rotational
symmetry, $\frac{A_{J,K}(t)} {B_{J,K}(t)} \geq 2$, the convexity, and the reflection invariance,
we can construct cylinder $S^{J}_0(B_{J,K}(t)) \times \mathbb{R}^{n-J}$ as an exterior barrier of
$\Sigma_{J,K, t}$.
Since the flow starting from $\Sigma_{J,K,t}$ becomes singular within time amount $t_*$,
from Remark \ref{rk comar cylind}  we get a lower bound of the radius $B_{J,K}(t)$ by
by a multiple of $|t_*|^{1/2}$.

For the family of solutions $\{ \Sigma_{J,K,t} \}$, we choose $\Omega = S^n(1) \setminus
(\{0 \} \times S^{n-J-1}(1) )$ in Theorem \ref{thm compactness Andrews flow noncpt limits},
it is easy to see that condition (\ref{eq cptness requirement noncpt limit}) holds because of
the uniform upper bound of $B_{J,K}(t)$ proved above.
Assume $f$ in  (\ref{eq Andrews flow}) satisfies Assumption E, we may apply the theorem to
 $\{ \Sigma_{J,K,t} \}$ with $t \in (- \infty, -1]$  and get a subsequential limit  $\Sigma^
 n_{J,\infty, t}, \, t \in (- \infty, -1]$.
This limit is convex and $O_{J+1} \times O_{n-J}$-rotationally symmetric.
It is clear that $\Sigma_{J,\infty, t}$ is invariant under the reflection in (\ref{eq reflect}).

To see the limit is a cylinder, fix a $t \in (-\infty, -1]$,
we choose a point $(0,z_{K,*}) \in \Sigma_{J,K,t}$ and let $\ell_{z_{K,*}}$
be a minimal geodesic in $\Sigma_{J,K,t}$ joining $(0,z_{K,*})$ and
its reflection $\mathcal{R}(0,z_{K,*})$.
From $\frac{A_{J,K}(t)}{B_{J,K}(t)}  \rightarrow \infty$ as $K \rightarrow \infty$,
 we have $A_{J,K}(t) \rightarrow \infty$,
hence the length of geodesic $\ell_{z_{K,*}}$ approaches to
infinity as $K \rightarrow \infty$. Since these geodesics all pass through the ball
$B^{n+1}_0(c_1^{-1}(t))$ due  to the upper bound of  $B_{J,K}(t)$,
these geodesics sub-limit to a line in  $\Sigma_{J,\infty,t}$.
Since $\Sigma_{J,\infty,t}$ has nonnegative sectional curvature,
by combining the Cheeger and Gromoll splitting theorem
and the  $O_{J+1} \times O_{n-J}$-rotational symmetry we conclude that $\Sigma_{J,\infty,t}$
splits as $S^{J}(r_*) \times \mathbb{R}^{n-J}$ for some radius $r_* > 0$.
 Proposition \ref{prop asym back limit cylinder} now follows.

\subsection{A speculation about the bowl type limits}
Choose a sequence of time $\hat{t}_K \rightarrow -\infty$ and a sequence of points
 $(0,z_{K}) \in \hat{\Sigma}_{J,  \infty, \hat{t}_K}$, we define dilation scale $Q_K$ so that
$f (\lambda_{K,1}, \cdots, \lambda_{K,n} ) =1$ where $\lambda_{K,1}, \cdots, \lambda_{K,n}$
are the principal curvatures of the dilated and translated hypersurface $Q_K \left (
\hat{\Sigma}_{J,  \infty, \hat{t}_K} - (0,z_{K}) \right )$ at the origin. Based on the knowledge
about the ancient solutions of mean curvature flow (\cite{An12}, \cite[Theorem 1.1]{HH16}) we would like
to speculate that the family of solutions $ \left \{ Q_K \left (
\hat{\Sigma}^n_{J,  \infty, Q_K^{-2}t+\hat{t}_K} - (0,z_{K}) \right ) \right \}$ would sub-converge
to a solution of the form $\operatorname{Bowl}_t^{J}\times \mathbb{R}^{n-J}$ where
$\operatorname{Bowl}_t^{J}$ is a translating soliton solution of
Andrews' flow (after dimension reduction) (compare \cite{AW94} and \cite{Wh03},  for example).
More precisely, let $\lambda_1(y), \cdots, \lambda_{J}(y)$ be the principal curvature of
$\operatorname{Bowl}_{-1}^{J} \subset \mathbb{R}^{J+1}$ at point $y$, then they satisfy
  \[
f(\lambda_1(y), \cdots, \lambda_{J}(y), 0, \cdots, 0) = - \langle V, \nu_y \rangle
\]
where $V \in \mathbb{R}^{J+1}$ is a fixed vector and $\nu$ is the unit normal direction. For a special choice of $f$ such translating solitons appear in the work of Brendle and Huisken (\cite{BH15}). Also note that the rotational symmetry of such translating solitons are studied by Bourni and Langford (\cite{BL16}).


\end{document}